\newtheorem{theorem}{Theorem}
\theoremstyle{plain}
\newtheorem{proposition}{Proposition}
\numberwithin{equation}{section}
 \numberwithin{equation}{section}
\begin{document}
\title[Norm inequalities related to Heinz and Heron operator means]{Norm inequalities related to Heinz and Heron operator means}

\author[A.G. Ghazanfari]{A. G. Ghazanfari}

\address{Department of Mathematics, Lorestan University, P.O. Box 465, Khoramabad, Iran.}

\email{ghazanfari.a@lu.ac.ir}

\subjclass[2010]{47A30, 47A63, 15A45.}

\keywords{Norm inequality, Operator inequality, Heinz mean.}

\thanks{Data sharing not applicable to this article as no datasets were generated or analyzed during the current study.}

\begin{abstract}
In this article we present some new comparisons between the Heinz and Heron operator means,
which improve some recent results known from the literature. We derive some refinements of
these inequalities for unitarily invariant norms with the help of the contractive maps.
\end{abstract}
\maketitle

\section{Introduction and preliminaries}

Let $M_{m,n}(\mathbb{C})$ be the space of $m\times n$ complex matrices and $M_n(\mathbb{C})=M_{n,n}(\mathbb{C})$. Let $|||.|||$ denote any unitarily invariant
norm on $M_n(\mathbb{C})$. So,
$|||UAV|||=|||A|||$ for all $A\in M_n(\mathbb{C})$  and for all unitary matrices $U,V\in M_n(\mathbb{C})$. The Hilbert-Schmidt and trace class norm
of $A=[a_{ij}]\in M_n(\mathbb{C})$ are denoted by
\begin{equation*}
\|A\|_2=\left(\sum_{j=1}^n s_j^2(A)\right)^{\frac{1}{2}},\left\| A\right\|_1=\sum_{j=1}^n s_j(A)
\end{equation*}
 where $s_1(A)\geq s_2(A)\geq...\geq s_n(A)$ are the singular values of $A$, which are the eigenvalues of the positive semidefinite
 matrix $\mid A\mid =(A^*A)^{\frac{1}{2}}$, arranged in decreasing order and repeated according to multiplicity.
For Hermitian matrices $A,B\in M_n(\mathbb{C})$, we write that $A\geqslant  0$
if $A$ is positive semidefinite, $A>0$ if $A$ is positive definite, and $A\geqslant B$ if $A-B\geqslant 0$.

Let $A,B,X\in M_{n}$, $A,B$ are positive definite,
The matrix version of the Heinz means are defined by
\begin{equation*}
H_{\nu} (A,X,B)=\frac{A^\nu XB^{1-\nu}+A^{1-\nu} XB^\nu}{2},
\end{equation*}
and the matrix version of the Heron means are defined by
\begin{equation*}
\mathcal{F}_{\alpha}(A,X,B)=(1-\alpha)A^{\frac{1}{2}}XB^{\frac{1}{2}}+\alpha\left(\frac{AX+XB}{2}\right).
\end{equation*}

In 1998, Zhan \cite{zha} proved that if
$\frac{1}{4}\leq \nu \leq \frac{3}{4}$ and $\alpha \in [\frac{1}{2},\infty)$, then
 \begin{align}\label{1.1}
|||A^\nu XB^{1-\nu}+A^{1-\nu} XB^\nu|||\leq \frac{2}{2+t}|||AX+XB+tA^{\frac{1}{2}}XB^{\frac{1}{2}}|||,
\end{align}
for $-2<t\leq 2$.\\

Kaur and Singh \cite{kau} have shown that if
$\frac{1}{4}\leq \nu \leq \frac{3}{4}$ and $\alpha \in [\frac{1}{2},\infty)$, then
\begin{equation}\label{1.2}
|||H_{\nu} (A,X,B) |||\leq \left|\left|\left|\mathcal{F}_{\alpha}(A,X,B)\right|\right|\right|.
\end{equation}

Obviously, if $A,B,X\in M_{n}$, such that $A,B$ are positive definite, then for $\frac{1}{4}\leq \nu \leq \frac{3}{4}$
and $\alpha \in [\frac{1}{2},\infty)$, and any unitarily invariant norm $|||.|||$, the following inequalities hold
\begin{equation}\label{1.3}
|||A^{\frac{1}{2}}XB^{\frac{1}{2}}|||
\leq |||H_{\nu} (A,X,B)|||
\leq \left|\left|\left|\mathcal{F}_{\alpha}(A,X,B)\right|\right|\right|.
\end{equation}

I. Ali, H. Yang and A. shakoor \cite{ali} gave a refinement of the inequality \eqref{1.3} as follows:
\begin{align*}
|||H_{\nu} (A,X,B) |||\leq (4r_0-1)|||A^{\frac{1}{2}}XB^{\frac{1}{2}}|||
+2(1-2r_0)\left|\left|\left|\mathcal{F}_{\alpha}(A,X,B)\right|\right|\right|.
\end{align*}
where $\frac{1}{4}\leq \nu \leq \frac{3}{4}$, $\alpha \in [\frac{1}{2},\infty )$ and $r_{0}=\min\{\nu,1-\nu\}$.

Heretofore the inequalities discussed above are proved in the setting of  matrices.
Kapil and Singh in \cite{kap2}, using the contractive maps proved that the relation \eqref{1.2} holds for positive definite operators
on a complex Hilbert space $H$.
They also proved that if $A, B$ and $X$ are bounded linear operators on $H$ and $A, B$ are
positive definite operators and $\alpha \in[\frac{1}{2}, \infty)$, then
\begin{equation}\label{1.4}
\left|\left|\left|A^{\frac{1}{2}}XB^{\frac{1}{2}}\right|\right|\right|\leq\left|\left|\left|\int_0^1A^{\nu}XB^{1-\nu}d\nu\right|\right|\right|
\leq\left|\left|\left|\mathcal{F}_\alpha(A,X,B)\right|\right|\right|,
\end{equation}
the function $|||\mathcal{F}_{\alpha}|||$ is increasing for $\alpha \in [\frac{1}{2}, \infty)$ and
$|||\mathcal{F}_{\alpha}|||\leq |||\mathcal{F}_{\frac{1}{2}}|||$ for $\alpha\in [0, \frac{1}{2}]$.

Heinz and Heron inequalities involving unitarily invariant
norms have received renewed attention in recent years and a remarkable variety of refinements
and generalizations have been found (see, for example, \cite{kap1,kau2,kit,kit2}).

The aim of this paper is to obtain refinements or generalizations of the above mentioned inequalities in the setting of operators
for unitarily invariant norms.

\section{ Main results}

Let $B(H)$ denote the  set of all bounded linear operators on a complex
Hilbert space $H$. An operator $A\in B(H)$
is positive, and we write $A\geq 0$, if $(Ax,x)\geq0$ for every vector $x\in H$. If $A$ and $B$ are self-adjoint
operators, the order relation $A\geq B$ means, as usual, that $A-B$ is a positive operator.

To reach inequalities for bounded self-adjoint operators on Hilbert space, we shall use
the following monotonicity property for operator functions:\\
If $X\in B(H)$ is self adjoint with a spectrum $Sp(X)$, and $f,g$  are continuous real valued functions
on an interval containing $Sp(X)$, then
\begin{equation}\label{2.1}
f(t)\geq g(t),~t\in Sp(X)\Rightarrow ~f(X)\geq g(X).
\end{equation}
For more details about this property, the reader is referred to
\cite{pec}.

Let $L_X, R_Y$ denote the left and right multiplication maps on $B(H)$, respectively, that is,
$L_X(T)=XT$ and $R_Y(T)=TY$. Since $L_X$ and $R_Y$ commute, we have
\[
e^{L_X+R_Y}(T)=e^XTe^Y.
\]

Let $U$ be an invertible positive operator in $B(H)$, then there exists a self-adjoint operator $V \in B(H)$ such that $U=e^V$.
Let $n\in \mathbb{N}$ and $A, B$ be two invertible positive operators in $B(H)$. To simplify computations,
we denote $A$ and $B$ by $e^{2X_1}$ and $e^{2Y_1}$, respectively,
where $X_1$ and $Y_1$ in $B(H)$ are self-adjoint. The corresponding operator map $L_{X_{1}}-R_{Y_{1}}$ is denoted by $D$.
With these notations, we now use the results proved in \cite{kap2, lar} to derive the Heinz and Heron type inequalities
for unitarily invariant norms.

\begin{proposition}\label{p.1}
Let $0\leq s_2\leq s_1$ and $r,r'\leq\frac{s_1+s_2}{2}$ or $s_1\leq s_2\leq0$ and $r,r'\geq\frac{s_1+s_2}{2}$, then the following
operator maps are contractive,
\begin{enumerate}
\item[(1)]
$\frac{(1+t)\cosh (rD)}{\cosh(s_1D)+t\cosh(s_2D)}$, for $-1<t\leq1$.
\item[(2)]
$\frac{\alpha\cosh (rD)+(1-\alpha) \cosh(r'D)}{\beta\cosh(s_1D)+(1-\beta)\cosh(s_2D)}$, for $0\leq\alpha\leq1,~\frac{1}{2}\leq\beta$.
\item[(3)]
$\frac{(1+t)\sinh (rD)}{r(\sinh(s_1D)+t\sinh(s_2D))}$, for $-1<t\leq1$ and $|s_1+s_2|\geq2$.
\item[(4)]
$\frac{\frac{\alpha}{r}\sinh (rD)+\frac{1-\alpha}{r'} \sinh(r'D)}{\beta\sinh(s_1D)+(1-\beta)\sinh(s_2D)}$, for $0\leq\alpha\leq1,~\frac{1}{2}\leq\beta$ and $|s_1+s_2|\geq2$.

\end{enumerate}

\end{proposition}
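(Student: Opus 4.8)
To prove the proposition, the plan is to pass from each operator map to a scalar function, reduce the four items to just two of them, and settle those two by a partial–fraction (Mittag--Leffler) argument. Each displayed map has the form $\varphi(D)$, where $\varphi$ is the function obtained by replacing $D$ with a real variable: for instance in item~(1), $\varphi(x)=\frac{(1+t)\cosh(rx)}{\cosh(s_1x)+t\cosh(s_2x)}$, whose denominator is positive because $-1<t$ and $0\le s_2\le s_1$. Using the machinery of \cite{kap2,lar}, it suffices to show that each such $\varphi$ is a positive definite function on $\mathbb R$ with $\varphi(0)\le 1$: by Bochner's theorem one then has $\varphi=\widehat\mu$ for a finite positive measure $\mu$ with $\mu(\mathbb R)=\varphi(0)\le 1$, whence $\varphi(D)(T)=\int_{\mathbb R}e^{itD}(T)\,d\mu(t)=\int_{\mathbb R}e^{itX_1}\,T\,e^{-itY_1}\,d\mu(t)$, and since $e^{\pm itX_1},e^{\pm itY_1}$ are unitary, $|||\varphi(D)(T)|||\le\int_{\mathbb R}|||e^{itX_1}Te^{-itY_1}|||\,d\mu(t)=\mu(\mathbb R)\,|||T|||\le|||T|||$.

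Next I would carry out two reductions. Because $\cosh$ is even and $\sinh$ is odd, none of the four maps changes if we replace $s_1,s_2,r,r'$ by their absolute values, so we may assume $0\le s_2\le s_1$ and $0\le r,r'\le\sigma:=\frac{s_1+s_2}{2}$ (Case~2 then collapses onto Case~1). Setting $t=\frac{1-\beta}{\beta}\in(-1,1]$ (valid since $\beta\ge\frac12$) identifies $\beta\cosh(s_1x)+(1-\beta)\cosh(s_2x)$ with $\beta\bigl(\cosh(s_1x)+t\cosh(s_2x)\bigr)$ and gives $\beta(1+t)=1$; hence the function of item~(2) equals the convex combination $\alpha\,g_r+(1-\alpha)\,g_{r'}$ of the function $g_\rho$ of item~(1), and likewise the function of item~(4) is a convex combination of two copies of the function of item~(3). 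Since convex combinations of positive definite functions are positive definite, item~(2) follows from item~(1) and item~(4) from item~(3). Finally, $\varphi(0)=1$ in items~(1)--(2), while in items~(3)--(4) it equals $\frac{1+t}{s_1+ts_2}=\frac{1}{\beta s_1+(1-\beta)s_2}$, and $\beta s_1+(1-\beta)s_2\ge\tfrac12(s_1+s_2)\ge1$ because $(\beta-\tfrac12)(s_1-s_2)\ge0$ and $s_1+s_2\ge2$; so the hypothesis $|s_1+s_2|\ge 2$ is exactly what secures $\varphi(0)\le 1$ here.

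It remains to prove items~(1) and~(3). For item~(1), write $\varphi(x)=\frac{\cosh(rx)}{\cosh(\sigma x)}\cdot\frac{(1+t)\cosh(\sigma x)}{\cosh(s_1x)+t\cosh(s_2x)}$; the first factor is positive definite with value $1$ at $0$ since $0\le r\le\sigma$, by the classical positive definiteness of $\cosh(ax)/\cosh(bx)$ for $|a|\le|b|$ that underlies the Heinz inequality (available via \cite{kap2,lar}), so the task is the positive definiteness of the second factor $h$. With $\delta=\frac{s_1-s_2}{2}\ge0$ one has $\cosh(s_1x)+t\cosh(s_2x)=(1+t)\cosh(\sigma x)\cosh(\delta x)+(1-t)\sinh(\sigma x)\sinh(\delta x)$, an even entire function; the substitution $x=iy$ turns its zero set into the real solution set of $(1+t)\cos(\sigma y)\cos(\delta y)=(1-t)\sin(\sigma y)\sin(\delta y)$, and since $-1<t\le1$ keeps $\frac{1-t}{1+t}\ge0$, all zeros are purely imaginary, say at $x=\pm i\lambda_k$ with $0<\lambda_1<\lambda_2<\cdots$. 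A Mittag--Leffler expansion then gives $h(x)=\sum_k\frac{c_k}{x^2+\lambda_k^2}$, and a residue computation, in which $0\le r\le\sigma$ bounds the growth of the numerator, shows $c_k\ge0$; as each $\frac{1}{x^2+\lambda_k^2}$ is positive definite (the Fourier transform of $\frac{\pi}{\lambda_k}e^{-\lambda_k|\xi|}$), so is $h$. For item~(3), cancelling the common factor $x$ rewrites the map as $\frac{(1+t)\,\mathrm{sinhc}(rx)}{s_1\,\mathrm{sinhc}(s_1x)+t\,s_2\,\mathrm{sinhc}(s_2x)}$ with $\mathrm{sinhc}(u)=\sinh(u)/u$; since $\mathrm{sinhc}$ is even and increasing in $|u|$, the factor $\mathrm{sinhc}(rx)/\mathrm{sinhc}(\sigma x)=\frac{\sigma\sinh(rx)}{r\sinh(\sigma x)}$ is positive definite with value $1$ at $0$ (the $\sinh$-analogue of the Heinz positive definiteness; for $r=0$ it is $\sigma x/\sinh(\sigma x)$), which reduces item~(3) to a single ratio whose denominator $\bigl(\sinh(s_1x)+t\sinh(s_2x)\bigr)/x=(1+t)\tfrac{\sinh(\sigma x)}{x}\cosh(\delta x)+(1-t)\cosh(\sigma x)\tfrac{\sinh(\delta x)}{x}$ is to be localized and partial-fraction expanded exactly as before.

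The main obstacle is this last localization for item~(3): unlike the $\cosh$-denominator, the entire function $\sinh(s_1z)+t\sinh(s_2z)$ need not have only purely imaginary zeros for unrestricted parameters, and proving that it does --- together with the positivity of the residues $c_k$ and the convergence of the expansion --- is precisely where the hypothesis $|s_1+s_2|\ge2$ re-enters; I expect this will require a Hermite--Biehler / Laguerre--P\'olya type argument for the zero location, followed by a careful sign analysis of the residues. (An alternative route to items~(1) and~(3), with the same analytic content and the same role for $|s_1+s_2|\ge2$, is to show that $u\mapsto h(\sqrt u)$ is completely monotone on $[0,\infty)$ and invoke Schoenberg's theorem together with the positive definiteness of Gaussians.) Once items~(1) and~(3) are in hand, items~(2) and~(4) follow from the reductions of the second paragraph, completing the proof.
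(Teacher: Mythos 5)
Your overall framework (pass to the scalar symbol, show it is positive definite with value at most $1$ at the origin, and reduce items (2) and (4) to (1) and (3) by the substitution $t=\tfrac{1}{\beta}-1$ and convexity) is sound, and the reduction step coincides with what the paper does. But the core of the proposition --- items (1) and (3) themselves --- is not actually proved in your write-up. The positivity of the residues $c_k$ in your Mittag--Leffler expansion of $\frac{(1+t)\cosh(\sigma x)}{\cosh(s_1x)+t\cosh(s_2x)}$ is asserted, not established, and for the $\sinh$ case you explicitly concede that the localization of the zeros of $\sinh(s_1z)+t\sinh(s_2z)$ and the sign analysis of the residues remain to be done (``I expect this will require a Hermite--Biehler / Laguerre--P\'olya type argument''). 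Those are precisely the hard analytic steps, so as it stands this is a program, not a proof; convergence of the partial-fraction expansion and the justification that it represents $h$ are likewise missing.

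The paper avoids all of this with a short algebraic device you did not use: write
\[
\cosh(s_1D)+t\cosh(s_2D)=\bigl(\cosh(s_1D)+\cosh(s_2D)\bigr)\left(1-\frac{(1-t)\cosh(s_2D)}{\cosh(s_1D)+\cosh(s_2D)}\right),
\]
invert the second factor by a Neumann (geometric) series with ratio $\frac{1-t}{2}\cdot\frac{2\cosh(s_2D)}{\cosh(s_1D)+\cosh(s_2D)}$, and apply the \emph{already known} contractivity of the maps $\frac{2\cosh(rD)}{\cosh(s_1D)+\cosh(s_2D)}$ and $\frac{2\cosh(s_2D)}{\cosh(s_1D)+\cosh(s_2D)}$ (Corollary 2.5 of \cite{kap2}); summing the series gives the factor $\frac{2}{1+t}$ and hence norm bound $1$. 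The $\sinh$ case is identical, with the extra prefactor $\frac{2}{|s_1+s_2|}\le 1$ supplied by the hypothesis $|s_1+s_2|\ge 2$ --- no zero localization is needed anywhere. Note that the same series identity, read at the scalar level, would also rescue your route: each term of the series is a product of functions of the form $\cosh(ax)/\cosh(bx)$ with $|a|\le|b|$ and $1/\cosh(\delta x)$, all positive definite, so $h$ is positive definite as a locally uniform limit of positive combinations, and your Bochner argument then closes the proof without any Mittag--Leffler or Hermite--Biehler analysis. Either fill in the residue and zero-location arguments or replace them by this expansion; as submitted, the proof of (1) and (3) is incomplete.
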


\begin{proof}
\begin{enumerate}
\item[(1)] It is easy to see that
\begin{align*}
&\frac{(1+t)\cosh (rD)}{\cosh(s_1D)+t\cosh(s_2D)}\\
&=\frac{(1+t)\cosh (rD)}{\cosh(s_1D)+\cosh(s_2D)}\left(1-\frac{(1-t)\cosh (s_2D)}{\cosh(s_1D)+\cosh(s_2D)}\right)^{-1}\\
&=\frac{1+t}{2}\frac{2\cosh (rD)}{\cosh(s_1D)+\cosh(s_2D)}\times
\sum_{n=0}^\infty\left(\frac{1-t}{2}\right)^n\left(\frac{2\cosh (s_2D)}{\cosh(s_1D)+\cosh(s_2D)}\right)^n.
\end{align*}
For each $T\in B(H)$, using part (2) of Corollary 2.5 of \cite{kap2}, we obtain
\begin{align*}
\left|\left|\left|\frac{(1+t)\cosh (rD)}{\cosh(s_1D)+t\cosh(s_2D)}T\right|\right|\right|\leq
\frac{1+t}{2}\sum_{n=0}^\infty\left(\frac{1-t}{2}\right)^n|||T|||=|||T|||.
\end{align*}
\item[(2)] To prove part (2), we write
\begin{align*}
&\frac{\alpha\cosh (rD)+(1-\alpha) \cosh(r'D)}{\beta\cosh(s_1D)+(1-\beta)\cosh(s_2D)}\\
&=\alpha\frac{\cosh (rD)}{\beta\cosh(s_1D)+(1-\beta)\cosh(s_2D)}+(1-\alpha)\frac{\cosh (r'D)}{\beta\cosh(s_1D)+(1-\beta)\cosh(s_2D)}
\end{align*}
\end{enumerate}
Now, using part (1), we obtain
\begin{multline*}
\left|\left|\left|\frac{\cosh (rD)}{\beta\cosh(s_1D)+(1-\beta)\cosh(s_2D)}T\right|\right|\right|\\
=\left|\left|\left|\frac{\frac{1}{\beta}\cosh (rD)}{\cosh(s_1D)+(\frac{1}{\beta}-1)\cosh(s_2D)}T\right|\right|\right|\leq 1.
\end{multline*}
Similarly,
\begin{equation*}
\left|\left|\left|\frac{\cosh (r'D)}{\beta\cosh(s_1D)+(1-\beta)\cosh(s_2D)}T\right|\right|\right|\leq 1.
\end{equation*}
Consequently,
\begin{align*}
\left|\left|\left|\frac{\alpha\cosh (rD)+(1-\alpha) \cosh(r'D)}{\beta\cosh(s_1D)+(1-\beta)\cosh(s_2D)}T\right|\right|\right| \leq1.
\end{align*}
\item[(3)] A simple calculation shows that
\begin{align*}
&\frac{(1+t)\sinh (rD)}{r(\sinh(s_1D)+t\sinh(s_2D))}\\
&=\frac{1+t}{s_1+s_2}\frac{(s_1+s_2)\sinh (rD)}{r(\sinh(s_1D)+\sinh(s_2D))}\left(1-\frac{(1-t)\sinh (s_2D)}{\sinh(s_1D)+\sinh(s_2D)}\right)^{-1}\\
&=\frac{1+t}{s_1+s_2}\frac{(s_1+s_2)\sinh (rD)}{r(\sinh(s_1D)+\sinh(s_2D))}\\
&\times\sum_{n=0}^\infty\left(\frac{2s_2}{s_1+s_2}\right)^n\left(\frac{1-t}{2}\right)^n\left(\frac{(s_1+s_2)\sinh (s_2D)}{s_2(\sinh(s_1D)+\sinh(s_2D))}\right)^n.
\end{align*}
Using part (3) of Corollary 2.5 in \cite{kap2}, and $|\frac{2s_2}{s_1+s_2}|\leq1$, we deduce for each $T\in B(H)$
\begin{align*}
\left|\left|\left|\frac{(1+t)\sinh (rD)}{r(\sinh(s_1D)+t\sinh(s_2D))}T\right|\right|\right|&\leq
\frac{1+t}{|s_1+s_2|}\sum_{n=0}^\infty\left(\frac{1-t}{2}\right)^n|||T|||\\
&=\frac{2}{|s_1+s_2|}|||T|||\leq |||T|||.
\end{align*}
The proof of (4) is similar to (2) and we omitted it.
\end{proof}

\begin{theorem}\label{t.1}
Let $A,X,B\in B(H)$ and $A, B$ be two positive definite operators.
Let $\frac{1}{2}\leq\beta\leq 1$ and $\alpha \geq \frac{1}{2}$.
\begin{enumerate}
\item[(1)]
If $\frac{3}{8}\leq\nu\leq\frac{5}{8}$, then
\begin{align}\label{2.2}
|||H_{\nu} (A,X,B) |||
&\leq \left|\left|\left|(1-\beta)A^{\frac{1}{2}}XB^{\frac{1}{2}}+\beta H_{\frac{1}{4}} (A,X,B) \right|\right|\right|\notag\\
&\leq \left|\left|\left|\mathcal{F}_{\alpha}(A,X,B)\right|\right|\right|.
\end{align}

\item[(2)] If $\frac{5}{16}\leq\nu\leq\frac{11}{16}$, then
\begin{align}\label{2.3}
|||H_{\nu} (A,X,B) |||&\leq \left|\left|\left|(1-\beta)H_{\frac{3}{8}}(A,X,B)+\beta H_{\frac{1}{4}} (A,X,B)\right|\right|\right|\notag\\
&\leq \left|\left|\left|\mathcal{F}_\alpha(A,X,B)\right|\right|\right|.
\end{align}
\end{enumerate}
\end{theorem}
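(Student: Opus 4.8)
The plan is to translate the whole statement into a question about contractive operator maps and then read the answer off Proposition~\ref{p.1}(2). Writing $A=e^{2X_1}$ and $B=e^{2Y_1}$ with $X_1,Y_1$ self-adjoint and $D=L_{X_1}-R_{Y_1}$ as in the preamble, and using that $L_{X_1}$ and $R_{Y_1}$ commute, one has $e^{sD}(T)=e^{sX_1}Te^{-sY_1}$ for every $T\in B(H)$, hence
\[
e^{sD}\bigl(A^{1/2}XB^{1/2}\bigr)=e^{(1+s)X_1}Xe^{(1-s)Y_1}=A^{\frac{1+s}{2}}XB^{\frac{1-s}{2}}.
\]
Averaging the cases $s$ and $-s$ and putting $s=2\nu-1$ yields the representation $H_{\nu}(A,X,B)=\cosh\bigl((2\nu-1)D\bigr)\bigl(A^{1/2}XB^{1/2}\bigr)$; the case $s=1$ gives $\tfrac12(AX+XB)=\cosh(D)\bigl(A^{1/2}XB^{1/2}\bigr)$, so $\mathcal{F}_{\alpha}(A,X,B)=\bigl[\alpha\cosh(D)+(1-\alpha)\cosh(0\cdot D)\bigr]\bigl(A^{1/2}XB^{1/2}\bigr)$. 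Since $\cosh$ is even, $H_{1/4}(A,X,B)=\cosh(\tfrac12 D)\bigl(A^{1/2}XB^{1/2}\bigr)$ and $H_{3/8}(A,X,B)=\cosh(\tfrac14 D)\bigl(A^{1/2}XB^{1/2}\bigr)$, and the middle quantities of \eqref{2.2} and \eqref{2.3} become $\bigl[\beta\cosh(\tfrac12 D)+(1-\beta)\cosh(0\cdot D)\bigr]\bigl(A^{1/2}XB^{1/2}\bigr)$ and $\bigl[\beta\cosh(\tfrac12 D)+(1-\beta)\cosh(\tfrac14 D)\bigr]\bigl(A^{1/2}XB^{1/2}\bigr)$ respectively.

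With this dictionary each of the four inequalities has the form $|||\Phi(Z)|||\le|||Z|||$, where $Z$ is a $\cosh(sD)$-combination applied to $A^{1/2}XB^{1/2}$ and $\Phi$ is the quotient of the relevant numerator combination by the $Z$-combination; the operators $\cosh(sD)\ge I$ are invertible, so these quotient maps are well defined. By the results of \cite{kap2}, a contractive map on $B(H)$ does not increase any unitarily invariant norm, so it suffices to verify that each $\Phi$ is contractive by matching it to Proposition~\ref{p.1}(2): that is, to check $0\le s_2\le s_1$, that the numerator coefficient lies in $[0,1]$, that the coefficient of $\cosh(s_1D)$ in the denominator is at least $\tfrac12$, and that the numerator exponents are $\le\tfrac{s_1+s_2}{2}$, replacing $\cosh(rD)$ by $\cosh(|r|D)$ whenever $r<0$.

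Carrying this out: for the first inequality of (1) take numerator $\cosh(|2\nu-1|D)$ over denominator $\beta\cosh(\tfrac12 D)+(1-\beta)\cosh(0\cdot D)$, so $s_1=\tfrac12$, $s_2=0$, $\tfrac{s_1+s_2}{2}=\tfrac14$, and the hypothesis $\tfrac38\le\nu\le\tfrac58$ is precisely $|2\nu-1|\le\tfrac14$; for the second inequality of (1) take numerator $\beta\cosh(\tfrac12 D)+(1-\beta)\cosh(0\cdot D)$ over denominator $\alpha\cosh(D)+(1-\alpha)\cosh(0\cdot D)$, so $s_1=1$, $s_2=0$, $\tfrac{s_1+s_2}{2}=\tfrac12$, the numerator exponents $\tfrac12$ and $0$ are $\le\tfrac12$, and $\alpha\ge\tfrac12$. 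For (2) the role played by $\cosh(0\cdot D)$ in the middle term is taken by $\cosh(\tfrac14 D)$: the first inequality uses $s_1=\tfrac12$, $s_2=\tfrac14$, $\tfrac{s_1+s_2}{2}=\tfrac38$, and $\tfrac5{16}\le\nu\le\tfrac{11}{16}$ is precisely $|2\nu-1|\le\tfrac38$; the second uses $s_1=1$, $s_2=0$ with numerator exponents $\tfrac12,\tfrac14\le\tfrac12$ and $\alpha\ge\tfrac12$. Throughout, $\tfrac12\le\beta\le1$ supplies both the bound $\beta\le1$, used whenever $\beta$ is a numerator coefficient, and the bound $\beta\ge\tfrac12$, used whenever $\beta$ is the coefficient of $\cosh(s_1D)$ in a denominator. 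I expect the only real work to be this bookkeeping — pinning down that the displayed $\nu$-intervals are exactly the exponent conditions $\tfrac{s_1+s_2}{2}\ge|2\nu-1|$ and that no parameter leaves the range allowed by Proposition~\ref{p.1}; there is no analytic obstacle, since all of it is carried by Proposition~\ref{p.1} together with \cite{kap2}.
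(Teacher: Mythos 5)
Your proposal is correct, and for the first inequality in each part it is essentially the paper's argument: the same representation $H_{\nu}(A,X,B)=\cosh((2\nu-1)D)T$ with $T=A^{1/2}XB^{1/2}$, the same identifications of the middle terms as $\bigl[(1-\beta)+\beta\cosh(\tfrac12 D)\bigr]T$ and $\bigl[(1-\beta)\cosh(\tfrac14 D)+\beta\cosh(\tfrac12 D)\bigr]T$, and the same appeal to Proposition~\ref{p.1} (the paper uses part (1) after substituting $t=\tfrac1\beta-1$, you use part (2) with numerator coefficient $1$; these are interchangeable). Where you genuinely diverge is the second inequality in \eqref{2.2} and \eqref{2.3}: the paper does \emph{not} run the contractive-map machinery there, but instead uses the triangle inequality (relation \eqref{2.5}) together with the previously known operator inequalities $|||A^{1/2}XB^{1/2}|||\le|||\mathcal{F}_\alpha|||$ and $|||H_{1/4}|||,|||H_{3/8}|||\le|||\mathcal{F}_\alpha|||$ imported from \cite{kap2} via \eqref{1.2}--\eqref{1.4}; you instead apply Proposition~\ref{p.1}(2) directly to the quotient of the middle $\cosh$-combination by $\alpha\cosh(D)+(1-\alpha)$, checking $s_1=1$, $s_2=0$, numerator exponents $\le\tfrac12$, numerator coefficients $\beta,1-\beta\in[0,1]$ (this is where $\beta\le1$ enters) and denominator coefficient $\alpha\ge\tfrac12$ (note the proposition as stated does allow $\alpha>1$). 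Your route is slightly more uniform and self-contained, since everything reduces to Proposition~\ref{p.1}(2) and you never need to cite the Heinz--Heron inequality \eqref{1.2} for operators; the paper's route is cheaper at that step (pure convexity plus a known result) but leans on the external reference. The only point stated loosely in your write-up is the well-definedness of the quotient maps (``$\cosh(sD)\ge I$''), which should be read as the invertibility of the denominator combinations in the functional-calculus framework of \cite{kap2,lar}, exactly as the paper itself tacitly does via the Neumann-series manipulations in Proposition~\ref{p.1}.
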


\begin{proof}

Put $T=A^\frac{1}{2}XB^\frac{1}{2}$, a calculus computation shows that
\begin{align*}
&A^\nu XB^{1-\nu}+A^{1-\nu}XB^\nu=A^{\nu-\frac{1}{2}}A^\frac{1}{2} XB^\frac{1}{2}B^{\frac{1}{2}-\nu}
+A^{\frac{1}{2}-\nu}A^\frac{1}{2}XB^\frac{1}{2}B^{\nu-\frac{1}{2}}\\
&=A^{\nu-\frac{1}{2}}TB^{\frac{1}{2}-\nu}
+A^{\frac{1}{2}-\nu}TB^{\nu-\frac{1}{2}}\\
&=e^{(2\nu-1)X_1}Te^{(1-2\nu)Y_1}+e^{(1-2\nu)X_1}Te^{(2\nu-1)Y_1}\\
&=e^{(2\nu-1)D}T+Te^{-(2\nu-1)D}=2\cosh((2\nu-1)D)T.
\end{align*}

Hence, we can write the operator on the left hand side of the first inequality in \eqref{2.2}  as
\begin{align}\label{2.4}
\frac{A^\nu XB^{1-\nu}+A^{1-\nu}XB^\nu}{2}=\cosh((2\nu-1)D)T
\end{align}
and the operator on the right hand side of the first inequality in \eqref{2.2} as
\begin{align*}
(1-\beta)A^{\frac{1}{2}}XB^{\frac{1}{2}}+\beta\frac{A^{\frac{1}{4}}XB^{\frac{3}{4}}
+A^{\frac{3}{4}}XB^{\frac{1}{4}}}{2}
=\left((1-\beta)+\beta\cosh \left(\frac{1}{2}D\right)\right)T.
\end{align*}
Considering
\begin{align*}
\frac{\cosh((2\nu-1)D)}{(1-\beta)+\beta\cosh(\frac{1}{2}D)}=\frac{\frac{1}{\beta}
\cosh((2\nu-1)D)}{\frac{(1-\beta)}{\beta}+\cosh(\frac{1}{2}D)}
\end{align*}
we put $t=\frac{1}{\beta}-1$, then
$|t|\leq1\Leftrightarrow \beta\geq\frac{1}{2}$.
Therefore, part (1) of Proposition \ref{p.1} proves the first inequality in \eqref{2.2}.
\begin{multline}\label{2.5}
\left|\left|\left|(1-\beta)A^{\frac{1}{2}}XB^{\frac{1}{2}}+\beta H_{\frac{1}{4}} (A,X,B) \right|\right|\right|\\
\leq (1-\beta)|||A^{\frac{1}{2}}XB^{\frac{1}{2}}|||+\beta |||H_{\frac{1}{4}} (A,X,B)|||\\
\leq (1-\beta)|||\mathcal{F}_\alpha(A,X,B)|||+\beta|||\mathcal{F}_\alpha(A,X,B)|||.
\end{multline}

Hence the proof of part (1) is completed.

Using the same arguments, we can show the operator on
the right hand side of the first inequality in \eqref{2.3} as
\begin{align}\label{2.6}
&(1-\beta)\frac{A^{\frac{3}{8}}XB^{\frac{5}{8}}+A^{\frac{5}{8}}XB^{\frac{3}{8}}}{2}
+\beta\frac{A^{\frac{1}{4}}XB^{\frac{3}{4}}+A^{\frac{3}{4}}XB^{\frac{1}{4}}}{2}\notag\\
&=\left((1-\beta)\cosh\left(\frac{1}{4}D\right)+\beta\cosh\left(\frac{1}{2}D\right)\right)T.
\end{align}

By part (1) of Proposition \ref{p.1} the function
$\frac{\cosh((2\nu-1)D)}{(1-\beta)\cosh\left(\frac{1}{4}D\right)+\beta\cosh\left(\frac{1}{2}D\right)}$
is contractive, therefore the first inequality in \eqref{2.3} holds. The Similar to relation \eqref{2.5},
the second inequality in \eqref{2.3} is proved.
Hence the proof is completed.

\end{proof}

\begin{theorem}\label{t.2}
Let $A,X,B\in B(H)$ and $A, B$ be two positive definite operators. Let
$\frac{1}{2}\leq \beta, \gamma, \delta\leq1$ and $\alpha\geq\frac{1}{2}$.

\begin{enumerate}

\item[(1)] If $\frac{9}{32}\leq\nu\leq\frac{23}{32}$, then
\begin{align}\label{2.7}
|||H_{\nu} (A,X,B) |||
&\leq \left|\left|\left|(1-\beta)H_{\frac{5}{16}} (A,X,B)+\beta H_{\frac{1}{4}} (A,X,B)
\right|\right|\right|\notag\\
&\leq \left|\left|\left|\mathcal{F}_\alpha(A,X,B)\right|\right|\right|.
\end{align}

\item[(2)] If $\frac{11}{32}\leq\nu\leq\frac{21}{32}$, then
\begin{align}\label{2.8}
|||H_{\nu} (A,X,B) |||
&\leq \left|\left|\left|(1-\gamma)H_{\frac{3}{8}} (A,X,B)+\gamma H_{\frac{5}{16}} (A,X,B)
\right|\right|\right|\notag\\
&\leq \left|\left|\left|(1-\beta)H_{\frac{3}{8}} (A,X,B)+\beta H_{\frac{1}{4}} (A,X,B)
\right|\right|\right|\notag\\
&\leq \left|\left|\left|\mathcal{F}_\alpha(A,X,B)\right|\right|\right|.
\end{align}
\end{enumerate}

\end{theorem}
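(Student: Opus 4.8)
The strategy mirrors exactly the one used in Theorem \ref{t.1}: convert every operator appearing in the chain of inequalities into the form $(\text{function of }D)\,T$ with $T=A^{1/2}XB^{1/2}$, and then verify that the relevant quotients of these functions are contractive by appealing to Proposition \ref{p.1}. The key computational identity, already established in the proof of Theorem \ref{t.1}, is that
\[
H_{\mu}(A,X,B)=\cosh\bigl((2\mu-1)D\bigr)T
\]
for any exponent $\mu$, while $A^{1/2}XB^{1/2}=T$. So each convex combination of Heinz means that occurs as a ``middle'' term becomes $\bigl((1-\beta)\cosh(s_1D)+\beta\cosh(s_2D)\bigr)T$ for the appropriate $s_1,s_2$, and $\mathcal F_\alpha(A,X,B)$ is handled as in \eqref{2.5} by the triangle inequality together with \eqref{1.3}--\eqref{1.4} (equivalently, inequality \eqref{1.2}), which bounds both $|||A^{1/2}XB^{1/2}|||$ and $|||H_\mu(A,X,B)|||$ by $|||\mathcal F_\alpha(A,X,B)|||$.

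For part (1), I would write the middle operator as
\[
(1-\beta)H_{\frac5{16}}(A,X,B)+\beta H_{\frac14}(A,X,B)
=\Bigl((1-\beta)\cosh\bigl(\tfrac38 D\bigr)+\beta\cosh\bigl(\tfrac12 D\bigr)\Bigr)T,
\]
using $2\cdot\frac5{16}-1=-\frac38$ and $2\cdot\frac14-1=-\frac12$. The first inequality in \eqref{2.7} then reduces to showing that
\[
\frac{\cosh\bigl((2\nu-1)D\bigr)}{(1-\beta)\cosh\bigl(\frac38 D\bigr)+\beta\cosh\bigl(\frac12 D\bigr)}
\]
is contractive. Applying part (1) of Proposition \ref{p.1} with $s_1=\frac12$, $s_2=\frac38$, $r=|2\nu-1|$ and $t=\frac1\beta-1\in(0,1]$ (after dividing numerator and denominator by $\beta$), the hypothesis $r\le\frac{s_1+s_2}{2}=\frac7{16}$ translates into $|2\nu-1|\le\frac7{16}$, i.e. $\frac9{32}\le\nu\le\frac{23}{32}$, which is precisely the stated range. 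The second inequality in \eqref{2.7} follows by the triangle inequality as in \eqref{2.5}, since each of $H_{\frac5{16}}(A,X,B)$ and $H_{\frac14}(A,X,B)$ is dominated in norm by $|||\mathcal F_\alpha(A,X,B)|||$.

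For part (2), the chain has three inequalities. The outermost two are instances of part (1) of this theorem and of part (2) of Theorem \ref{t.1} respectively (the bottom step, comparing $(1-\beta)H_{\frac38}+\beta H_{\frac14}$ with $\mathcal F_\alpha$, is literally the second inequality of \eqref{2.3}; the passage from $H_\nu$ through $(1-\gamma)H_{\frac38}+\gamma H_{\frac5{16}}$ needs a fresh application of Proposition \ref{p.1}(1)). The genuinely new step is the top one: writing
\[
(1-\gamma)H_{\frac38}(A,X,B)+\gamma H_{\frac5{16}}(A,X,B)
=\Bigl((1-\gamma)\cosh\bigl(\tfrac14 D\bigr)+\gamma\cosh\bigl(\tfrac38 D\bigr)\Bigr)T,
\]
and then checking that
\[
\frac{\cosh\bigl((2\nu-1)D\bigr)}{(1-\gamma)\cosh\bigl(\frac14 D\bigr)+\gamma\cosh\bigl(\frac38 D\bigr)}
\]
is contractive; here Proposition \ref{p.1}(1) applies with $s_1=\frac38$, $s_2=\frac14$, so $\frac{s_1+s_2}{2}=\frac5{16}$, forcing $|2\nu-1|\le\frac5{16}$, i.e. $\frac{11}{32}\le\nu\le\frac{21}{32}$. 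The only subtlety — and the point that deserves a sentence of care rather than real difficulty — is bookkeeping: one must check that each quotient genuinely has the shape required by Proposition \ref{p.1}(1) after normalizing by $\beta$ (resp. $\gamma$, $\delta$), that the resulting $t=\frac1\beta-1$ lies in $(-1,1]$ exactly when $\frac12\le\beta\le1$, and that the $\nu$-range quoted is the sharp consequence of $|2\nu-1|\le\frac{s_1+s_2}{2}$ for the two pairs $(s_1,s_2)$ involved. Everything else is routine.
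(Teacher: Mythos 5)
Your treatment of part (1) is exactly the paper's argument (rewrite the middle term as $\bigl((1-\beta)\cosh(\tfrac38 D)+\beta\cosh(\tfrac12 D)\bigr)T$, apply Proposition~\ref{p.1}(1) with $t=\tfrac1\beta-1$, $s_1=\tfrac12$, $s_2=\tfrac38$, then the triangle inequality as in \eqref{2.5}), and your identification of the ranges $|2\nu-1|\le\tfrac7{16}$ and $|2\nu-1|\le\tfrac5{16}$ is correct. However, in part (2) you have a genuine gap: the chain \eqref{2.8} contains \emph{three} inequalities, and your bookkeeping ("the outermost two \dots the genuinely new step is the top one") accounts for only two of them. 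You prove the top step $|||H_\nu|||\le|||(1-\gamma)H_{\frac38}+\gamma H_{\frac5{16}}|||$ and you correctly note that the bottom step is the second inequality of \eqref{2.3}, but the middle step, namely
\begin{equation*}
\left|\left|\left|(1-\gamma)H_{\frac38}(A,X,B)+\gamma H_{\frac5{16}}(A,X,B)\right|\right|\right|
\le\left|\left|\left|(1-\beta)H_{\frac38}(A,X,B)+\beta H_{\frac14}(A,X,B)\right|\right|\right|,
\end{equation*}
is nowhere addressed, and it does not follow from the two steps you did prove.

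This missing step is precisely where the paper uses part (2) of Proposition~\ref{p.1} (the two-term-over-two-term cosh quotient): it shows that
$\frac{(1-\gamma)\cosh\left(\frac14 D\right)+\gamma\cosh\left(\frac38 D\right)}{(1-\beta)\cosh\left(\frac14 D\right)+\beta\cosh\left(\frac12 D\right)}$
is contractive, taking $r=\tfrac14$, $r'=\tfrac38$, $s_1=\tfrac12$, $s_2=\tfrac14$, so that $r,r'\le\frac{s_1+s_2}{2}=\tfrac38$ and $\beta\ge\tfrac12$ meet the hypotheses. (Alternatively, you could recover it from the tools you already invoke: bound $|||(1-\gamma)H_{\frac38}+\gamma H_{\frac5{16}}|||$ by $(1-\gamma)|||H_{\frac38}|||+\gamma|||H_{\frac5{16}}|||$ and dominate each term by $|||(1-\beta)H_{\frac38}+\beta H_{\frac14}|||$ via two applications of Proposition~\ref{p.1}(1), since $\tfrac14\le\tfrac38$ and $\tfrac38\le\tfrac38$.) Until this middle inequality is supplied, the proof of \eqref{2.8} is incomplete; with it, your argument coincides with the paper's.
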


\begin{proof}
By the same arguments and notations used earlier,
we can show the operator on the right hand side of the first inequality in \eqref{2.7} as

\begin{align*}
&(1-\beta)\frac{A^{\frac{5}{16}}XB^{\frac{11}{16}}+A^{\frac{11}{16}}XB^{\frac{5}{16}}}{2}
+\beta\frac{A^{\frac{1}{4}}XB^{\frac{3}{4}}+A^{\frac{3}{4}}XB^{\frac{1}{4}}}{2}\notag\\
&=\left((1-\beta)\cosh\left(\frac{3}{8}D\right)+\beta\cosh\left(\frac{1}{2}D\right)\right)T.
\end{align*}
Being contractive the map $\frac{\cosh((2\nu-1)D)}{(1-\beta)\cosh\left(\frac{3}{8}D\right)+\beta\cosh\left(\frac{1}{2}D\right)}$
 proves the first inequality in \eqref{2.7}. The Similar to relation \eqref{2.5},
the second inequality in \eqref{2.7} is proved.

It is easy to see that
\begin{align*}
&(1-\gamma)\frac{A^{\frac{3}{8}}XB^{\frac{5}{8}}+A^{\frac{5}{8}}XB^{\frac{3}{8}}}{2}
+\gamma\frac{A^{\frac{5}{16}}XB^{\frac{11}{16}}+A^{\frac{11}{16}}XB^{\frac{5}{16}}}{2}\notag\\
&=\left((1-\gamma)\cosh\left(\frac{1}{4}D\right)+\gamma\cosh\left(\frac{3}{8}D\right)\right)T.
\end{align*}
The maps $\frac{\cosh((2\nu-1)D)}{(1-\gamma)\cosh\left(\frac{1}{4}D\right)+\gamma\cosh\left(\frac{3}{8}D\right)}$
and $\frac{(1-\gamma)\cosh\left(\frac{1}{4}D\right)\gamma\cosh\left(\frac{3}{8}D\right)}{(1-\beta)\cosh\left(\frac{1}{4}D\right)\beta\cosh\left(\frac{1}{2}D\right)}$
are contractive. Therefore we get the desired results in  part (2).
\end{proof}

In the following Theorem, we give a refinement of \eqref{1.4}.

\begin{theorem}\label{t.3}
Let $A,X,B\in B(H)$ and $A, B$ be two positive definite operators. If $\frac{1}{4}<\nu<\frac{3}{4}$ and $\alpha\geq \frac{1}{2}$, then
\begin{align}\label{2.9}
|||H_\nu(A,X,B)|||&\leq\left|\left|\left|\frac{1}{2}H_\frac{1}{8}(A,X,B)+\frac{1}{2}H_\frac{3}{8}(A,X,B)\right|\right|\right|
\leq\left|\left|\left|\int_0^1A^{t}XB^{1-t}dt\right|\right|\right|\notag\\
&\leq \left|\left|\left|\frac{1}{2}H_\frac{1}{4}(A,X,B)+\frac{1}{2}F_\frac{1}{2}(A,X,B)\right|\right|\right|\notag\\
&\leq\left|\left|\left|\mathcal{F}_\alpha(A,X,B)\right|\right|\right|.
\end{align}

\end{theorem}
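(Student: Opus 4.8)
The plan is to follow the strategy of the proofs of Theorems~\ref{t.1} and \ref{t.2}: put $T=A^{1/2}XB^{1/2}$, write $A=e^{2X_1}$, $B=e^{2Y_1}$, $D=L_{X_1}-R_{Y_1}$, express every operator occurring in \eqref{2.9} as a function of $D$ applied to $T$, and reduce each of the four inequalities to the contractivity of a ratio of such functions, to be verified via Proposition~\ref{p.1}. First I would record the necessary identities. Starting from $H_\nu(A,X,B)=\cosh((2\nu-1)D)T$ (as in \eqref{2.4}), $AX=e^{D}T$, $XB=e^{-D}T$, $A^{t}XB^{1-t}=e^{(2t-1)D}T$, and using $2\cosh a\cosh b=\cosh(a+b)+\cosh(a-b)$ and $1+\cosh x=2\cosh^2(x/2)$, one gets
\[
\tfrac12 H_{1/8}(A,X,B)+\tfrac12 H_{3/8}(A,X,B)=\cosh(\tfrac12 D)\cosh(\tfrac14 D)\,T,\qquad \int_0^1 A^{t}XB^{1-t}\,dt=\frac{\sinh D}{D}\,T,
\]
\[
\tfrac12 H_{1/4}(A,X,B)+\tfrac12 F_{1/2}(A,X,B)=\Bigl(\tfrac12\cosh(\tfrac12 D)+\tfrac14+\tfrac14\cosh D\Bigr)T=\cosh(\tfrac12 D)\cosh^2(\tfrac14 D)\,T,
\]
together with $\mathcal{F}_\alpha(A,X,B)=\bigl((1-\alpha)+\alpha\cosh D\bigr)T$; I would also use the factorization $\dfrac{\sinh z}{z}=\prod_{k\ge1}\cosh(z/2^{k})$, i.e. $\dfrac{\sinh D}{D}=\cosh(\tfrac12 D)\cosh(\tfrac14 D)\cdot\dfrac{\sinh(D/4)}{D/4}$.

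With these identities the four inequalities of \eqref{2.9} become, in order, the assertions that
\[
\frac{2\cosh((2\nu-1)D)}{\cosh(\tfrac34 D)+\cosh(\tfrac14 D)},\qquad \frac{D/4}{\sinh(D/4)},\qquad \frac{\tanh(D/4)}{D/4},\qquad \frac{\cosh(\tfrac12 D)\cosh^2(\tfrac14 D)}{(1-\alpha)+\alpha\cosh D}
\]
are contractive maps on $(B(H),|||\cdot|||)$. The first is immediate from part~(1) of Proposition~\ref{p.1} with $t=1$, $r=2\nu-1$ (note $\cosh(rD)$ depends only on $|r|$), $s_1=\tfrac34$, $s_2=\tfrac14$, since $|r|\le\frac{s_1+s_2}{2}=\tfrac12$ is precisely the hypothesis $\tfrac14<\nu<\tfrac34$. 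For the last one I would argue as in \eqref{2.5}, namely
\[
|||\tfrac12 H_{1/4}(A,X,B)+\tfrac12 F_{1/2}(A,X,B)|||\le\tfrac12|||H_{1/4}(A,X,B)|||+\tfrac12|||F_{1/2}(A,X,B)|||\le|||\mathcal{F}_\alpha(A,X,B)|||,
\]
using \eqref{1.3} for $H_{1/4}$ and the monotonicity of $\alpha\mapsto|||\mathcal{F}_\alpha|||$ recorded after \eqref{1.4} (here $F_{1/2}=\mathcal{F}_{1/2}$); alternatively one may split $\cosh(\tfrac12 D)\cosh^2(\tfrac14 D)=\tfrac12\cosh(\tfrac12 D)+\tfrac12\cdot\tfrac12(1+\cosh D)$ and bound the two resulting quotients by part~(1) of Proposition~\ref{p.1} (with $t=\frac{1-\alpha}{\alpha}\in(-1,1]$, $r=\tfrac12$, $s_1=1$, $s_2=0$) and by the same monotonicity.

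The main obstacle will be the middle pair, since Proposition~\ref{p.1} is stated only for ratios with finitely many hyperbolic terms whereas $\sinh D/D$ is not of that kind; this is where the product formula enters. For $\dfrac{D/4}{\sinh(D/4)}$ I would invoke $\dfrac{z}{\sinh z}=\prod_{k\ge1}\operatorname{sech}(z/2^{k})$, observe that each factor $\operatorname{sech}(D/2^{k+2})$ is contractive by part~(1) of Proposition~\ref{p.1} (take $t=1$, $r=0$, $s_1=s_2=2^{-(k+2)}$), recall that a finite product of contractions is a contraction, and then pass to the limit: the scalar partial products converge uniformly on the compact real spectrum of $D$, hence the operator partial products converge in operator norm and the limit $\dfrac{D/4}{\sinh(D/4)}$ remains a contraction. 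For $\dfrac{\tanh(D/4)}{D/4}$ I would use $\dfrac{\tanh z}{z}=\int_0^1\operatorname{sech}^2(tz)\,dt$, so that $\dfrac{\tanh(D/4)}{D/4}=\int_0^1\operatorname{sech}^2(tD/4)\,dt$ is an average over $t\in[0,1]$ of the contractions $\operatorname{sech}^2(tD/4)=\bigl(\operatorname{sech}(tD/4)\bigr)^2$, hence itself a contraction via $|||\int_0^1\Phi_t(T)\,dt|||\le\int_0^1|||\Phi_t(T)|||\,dt\le|||T|||$. Combining the four contractivity statements gives \eqref{2.9}; the only non-routine points are these two limiting arguments, both standard.
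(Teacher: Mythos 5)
Your reduction is exactly the one the paper uses: the same identities $H_\nu(A,X,B)=\cosh((2\nu-1)D)T$, $\int_0^1A^{t}XB^{1-t}dt=(D^{-1}\sinh D)T$, $\tfrac12H_{1/4}+\tfrac12F_{1/2}=\cosh(\tfrac12D)\cosh^2(\tfrac14D)T$, the same four contractive ratios (your $\tfrac{D/4}{\sinh(D/4)}$ and $\tfrac{\tanh(D/4)}{D/4}$ are the correct simplified forms; the paper's intermediate expression $\tfrac{D\cosh(\frac14D)}{2\sinh D}$ has a typo, $\sinh D$ should be $\sinh(\tfrac12D)$, after which it agrees with yours), the same use of Proposition~\ref{p.1}(1) with $t=1$ for the first inequality, and the same triangle-inequality argument with $|||H_{1/4}|||\le|||\mathcal{F}_\alpha|||$ and $|||\mathcal{F}_{1/2}|||\le|||\mathcal{F}_\alpha|||$ for the last. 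The one genuine difference is how the two middle contractivity claims are justified: the paper simply cites parts (3) and (4) of Proposition 2.3 in \cite{kap2}, whereas you prove them self-containedly via the product $z/\sinh z=\prod_{k\ge1}\operatorname{sech}(z/2^k)$ (each factor contractive by Proposition~\ref{p.1}(1)) plus a limit, and via $\tanh z/z=\int_0^1\operatorname{sech}^2(tz)\,dt$ plus averaging over contractions. This buys independence from the external reference, at the cost of one step you should make explicit: operator-norm convergence of the partial products does not by itself transfer the bound $|||P_n(D)T|||\le|||T|||$ to the limit, since $|||\cdot|||$ need not be continuous in that topology; you need a lower semicontinuity (Fatou-type) property of unitarily invariant norms, e.g. WOT-lower semicontinuity of the Ky Fan norms together with Ky Fan dominance, which is standard but deserves a sentence. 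With that caveat noted, your argument is correct and yields the same chain \eqref{2.9}.
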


\begin{proof}
By the relation \eqref{2.3}, we have
\begin{align*}
H_\nu(A,X,B)=\cosh((2\nu-1)D)T.
\end{align*}

Again as in previous theorems, we get
\begin{align*}
\frac{1}{2}H_\frac{1}{8}(A,X,B)+\frac{1}{2}H_\frac{3}{8}(A,X,B)&=\left(\frac{1}{2}\cosh\left(\frac{3}{4}D\right)
+\frac{1}{2}\cosh\left(\frac{1}{4}D\right)\right)T,
\end{align*}

\begin{align}\label{2.90}
\int_0^1A^{\nu}XB^{1-\nu}d\nu=\int_0^1\exp((2\nu-1)D)T d\nu=(D^{-1}\sinh D )T,
\end{align}
and
\begin{align*}
\frac{1}{2}H_\frac{1}{4}(A,X,B)+\frac{1}{2}F_\frac{1}{2}(A,X,B)=\left(\frac{1}{2}\cosh\left(\frac{1}{2}D\right)
+\frac{1}{2}\left(\frac{1+\cosh(D)}{2}\right)\right)T.
\end{align*}
We get the first inequality in \eqref{2.9}, since the map $\frac{2\cosh(2\nu-1)D}{\cosh(\frac{3}{4}D)+\cosh(\frac{1}{4}D)}$ is contractive.

By part (3) of the Proposition 2.3 in \cite{kap2}, the map $\frac{D(\cosh(\frac{3}{4}D)+\cosh(\frac{1}{4}D))}{2\sinh D}=\frac{D\cosh(\frac{1}{4}D)}{2\sinh D}$ is contractive.
This proves the second inequality in \eqref{2.9}.

By part (4) of the Proposition 2.3 in \cite{kap2}, the following map is contractive.
\begin{align*}
&\frac{2\sinh D}{D\left(\cosh\left(\frac{1}{2}D\right)+\frac{1+\cosh(D)}{2}\right)}
=\frac{2\sinh D}{D\left(\cosh\left(\frac{1}{2}D\right)+\cosh^2(\frac{1}{2}D)\right)}\\
&=\frac{2\sinh(\frac{1}{2}D)\cosh(\frac{1}{2}D)}{D\left(\cosh\left(\frac{1}{2}D\right)
\cosh^2(\frac{1}{4}D)\right)}=\frac{4\sinh(\frac{1}{4}D)}{D\cosh\left(\frac{1}{4}D\right)}.
\end{align*}
This proves the third inequality in \eqref{2.9}.

\begin{align*}
\left|\left|\left|\frac{1}{2}H_\frac{1}{4}(A,X,B)+\frac{1}{2}F_\frac{1}{2}(A,X,B)\right|\right|\right|
\leq  \frac{1}{2}|||H_\frac{1}{4}(A,X,B)|||+\frac{1}{2}|||F_\frac{1}{2}(A,X,B)|||\\
\leq \frac{1}{2}|||F_\alpha(A,X,B)|||+\frac{1}{2}|||F_\alpha(A,X,B)|||=|||F_\alpha(A,X,B)|||.
\end{align*}
This completes the proof.
\end{proof}

As an application, we give the following inequalities for unitarily invariant norms.
From inequality \eqref{1.2}, we get
\begin{align*}
\left|\left|\left| \int_{\frac{1}{4}}^{\frac{3}{4}}H_\nu(A,X,B)d\nu\right|\right|\right|&\leq\int_{\frac{1}{4}}^{\frac{3}{4}}|||H_\nu(A,X,B)|||d\nu\\
&\leq\int_{\frac{1}{4}}^{\frac{3}{4}}|||F_\alpha(A,X,B)|||d\nu=\frac{1}{2}|||F_\alpha(A,X,B)|||.
\end{align*}

Hence from inequalities \eqref{2.2}, \eqref{2.7} and \eqref{2.8} respectively, we get
\begin{align*}
&\left|\left|\left| \int_{\frac{3}{8}}^{\frac{5}{8}}H_\nu(A,X,B)d\nu\right|\right|\right|\leq\frac{1}{4}|||F_\alpha(A,X,B)|||,\\
&\left|\left|\left| \int_{\frac{9}{32}}^{\frac{23}{32}}H_\nu(A,X,B)d\nu\right|\right|\right|\leq\frac{7}{16}|||F_\alpha(A,X,B)|||
\end{align*}
and
\begin{align*}
\left|\left|\left| \int_{\frac{11}{32}}^{\frac{21}{32}}H_\nu(A,X,B)d\nu\right|\right|\right|\leq\frac{5}{16}|||F_\alpha(A,X,B)|||.
\end{align*}

As other applications, we now consider some inequalities that involve differences and sums of operators.
These inequalities in the operator setting can be seen in \cite{kap2}, and in the matrix setting can be seen in \cite{sin}.
In the following theorems, we give new forms of the presented inequalities in Theorem 4.4 and Theorem 4.10 in \cite{sin},
and inequality (3.25) in \cite{kap2}. We also give complementary inequalities to them.

Let $p\in \mathbb{R}$, $A,X,B\in B(H)$ and $A, B$ be two positive definite operators.
For $p\in \mathbb{R}$, we introduce the function $f(\nu)=|||A^\nu XB^{p-\nu}+A^{p-\nu}XB^\nu|||$.
It is easy to show that the function $f(\nu)$ is decreasing for $\nu\leq \frac{p}{2}$, and increasing for $\nu\geq \frac{p}{2}$.
The part (2) of Proposition \ref{p.1}, implies that $f(\nu)$ is convex on $\mathbb{R}$.

\begin{theorem}\label{t.5}
Let $r,p,\nu\in \mathbb{R}$, $~A,X,B\in B(H)$ and $A, B$ be two positive definite operators.
If $\nu\leq 2r\leq p$ or $\nu\geq 2r\geq p$, then for $|t|\leq 1$

\begin{align}\label{2.10}
(1)~(1+t)&|||A^r XB^{p-r}+A^{p-r}XB^r|||\notag\\
&\leq |||A^pX+t(A^\nu XB^{p-\nu}+A^{p-\nu}XB^{\nu})+XB^p)|||.
\end{align}
Furthermore, if $|p-\nu|\geq 1$, then
\begin{align}\label{2.11}
(2&)~(1+t)|||A^r XB^{p-r}-A^{p-r}XB^r|||\notag\\
&\leq |p-2r|~|||A^pX+t(A^\nu XB^{p-\nu}-A^{p-\nu}XB^{\nu})-XB^p)|||.
\end{align}

\end{theorem}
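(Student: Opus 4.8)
The plan is to repeat the reduction used in the proofs of Theorems~\ref{t.1}--\ref{t.3}: express everything through the single operator $T=A^{p/2}XB^{p/2}$ and the map $D=L_{X_1}-R_{Y_1}$, where $A=e^{2X_1}$ and $B=e^{2Y_1}$, convert the products into hyperbolic functions of $D$, and then appeal to Proposition~\ref{p.1}. For any real $q$, exactly as in the derivation of \eqref{2.4},
\[A^{q}XB^{p-q}+A^{p-q}XB^{q}=e^{(2q-p)D}T+e^{-(2q-p)D}T=2\cosh((2q-p)D)T,\]
\[A^{q}XB^{p-q}-A^{p-q}XB^{q}=2\sinh((2q-p)D)T,\]
and in particular $A^{p}X+XB^{p}=2\cosh(pD)T$ and $A^{p}X-XB^{p}=2\sinh(pD)T$. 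Using these with $q=r$ and $q=\nu$, inequality \eqref{2.10} becomes $(1+t)\,|||\cosh((2r-p)D)T|||\le|||(\cosh(pD)+t\cosh((2\nu-p)D))T|||$, and inequality \eqref{2.11} becomes $(1+t)\,|||\sinh((2r-p)D)T|||\le|p-2r|\,|||(\sinh(pD)+t\sinh((2\nu-p)D))T|||$.

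Since $X$, hence $T$, is arbitrary in $B(H)$, \eqref{2.10} follows once the operator map $\dfrac{(1+t)\cosh((2r-p)D)}{\cosh(pD)+t\cosh((2\nu-p)D)}$ is shown to be contractive, and I would read this off part~(1) of Proposition~\ref{p.1} with $s_1=p$, $s_2=2\nu-p$ (so that $\tfrac{s_1+s_2}{2}=\nu$) and numerator argument $2r-p$. The hypothesis $\nu\le 2r\le p$, respectively $\nu\ge 2r\ge p$, is exactly what places $s_1,s_2$ in the common-sign ordering demanded there and keeps $|2r-p|$ no larger than $\big|\tfrac{s_1+s_2}{2}\big|$; and since $t=-1$ makes the left side of \eqref{2.10} vanish, the stated range $|t|\le1$ and the range $-1<t\le1$ of Proposition~\ref{p.1} amount to the same claim. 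This establishes \eqref{2.10}.

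For \eqref{2.11} the same reasoning reduces the problem to contractivity of $\dfrac{(1+t)\sinh((2r-p)D)}{|p-2r|\,(\sinh(pD)+t\sinh((2\nu-p)D))}$. As $\sinh$ is odd, $\dfrac{\sinh((2r-p)D)}{p-2r}$ and $\dfrac{\sinh((2r-p)D)}{|p-2r|}$ differ only by a factor $\pm1$, which a unitarily invariant norm cannot see, so it suffices to prove that $\dfrac{(1+t)\sinh((2r-p)D)}{(2r-p)(\sinh(pD)+t\sinh((2\nu-p)D))}$ is contractive. That is precisely part~(3) of Proposition~\ref{p.1} with $s_1=p$, $s_2=2\nu-p$ and numerator argument $2r-p$, provided its side condition $|s_1+s_2|\ge2$ holds. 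Verifying this is the step I expect to be the main obstacle: in the range in which Proposition~\ref{p.1}(3) is applicable (that is, $0\le 2\nu-p\le p$ or $p\le 2\nu-p\le 0$) one checks that $|p-\nu|\le|\nu|$, so the extra hypothesis $|p-\nu|\ge1$ forces $|s_1+s_2|=2|\nu|\ge2$; confirming this chain of inequalities---and that the ordering and size conditions of Proposition~\ref{p.1} are indeed consequences of $\nu\le 2r\le p$ or $\nu\ge 2r\ge p$---is the only genuinely delicate point, the remainder being the hyperbolic identities recorded above.
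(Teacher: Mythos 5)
Your proposal is essentially the paper's own proof: the same reduction to $U=A^{p/2}XB^{p/2}$, the same rewriting of the sums and differences as $\cosh$, respectively $\sinh$, of multiples of $D$ applied to $U$, and the same appeal to parts (1) and (3) of Proposition~\ref{p.1}. The only divergence is your sign convention $s_2=2\nu-p$: the paper takes $s_1=p$, $s_2=p-2\nu$ and numerator argument $p-2r$, under which the side condition of part (3) reads $|s_1+s_2|=2|p-\nu|\ge 2$, i.e.\ it is literally the stated hypothesis $|p-\nu|\ge 1$, so the detour through $|p-\nu|\le|\nu|$ is unnecessary --- and note that this detour, as well as your claim that $\nu\le 2r\le p$ ``exactly'' yields the common-sign ordering, is only valid when $2\nu-p$ and $p$ have the same sign, which the theorem's hypothesis does not by itself guarantee (e.g.\ $\nu=0.1$, $p=1$ gives $s_2<0<s_1$ in your convention, while $s_2=p-2\nu=0.8$ is fine). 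Since $\cosh$ is even and a unitarily invariant norm does not see the sign of $\sinh((2r-p)D)$, your version is mathematically the same argument, and the paper itself checks the ordering hypotheses of Proposition~\ref{p.1} no more carefully than you do.
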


\begin{proof}
\begin{enumerate}
\item[(1)]
Notice that
\begin{align*}
A^\nu XB^{p-\nu}+A^{p-\nu}XB^\nu=A^{\nu-\frac{p}{2}}A^{\frac{p}{2}}XB^{\frac{p}{2}}B^{\frac{p}{2}-\nu}
+A^{\frac{p}{2}-\nu}A^{\frac{p}{2}}XB^{\frac{p}{2}}B^{\nu-\frac{p}{2}}.
\end{align*}
Put $U=A^{\frac{p}{2}}XB^{\frac{p}{2}}$. Then
\begin{align*}
A^\nu XB^{p-\nu}+A^{p-\nu}XB^\nu&=A^{\nu-\frac{p}{2}}UB^{\frac{p}{2}-\nu}
+A^{\frac{p}{2}-\nu}UB^{\nu-\frac{p}{2}}\\
&=2(\cosh(p-2\nu)D)U,
\end{align*}

\begin{align*}
A^p X+XB^p=A^{\frac{p}{2}}UB^{-\frac{p}{2}}
+A^{-\frac{p}{2}}UB^{\frac{p}{2}}=2(\cosh pD)U.
\end{align*}
Using part (1) of Proposition \ref{p.1}, we get the following map is contractive.
\begin{align*}
\frac{(1+t)\cosh((p-2r)D)}{\cosh(pD)+t\cosh(p-2\nu)D}.
\end{align*}
\item[(2)]
We know that
\begin{align*}
A^\nu XB^{p-\nu}-A^{p-\nu}XB^\nu=A^{\nu-\frac{p}{2}}A^{\frac{p}{2}}XB^{\frac{p}{2}}B^{\frac{p}{2}-\nu}
-A^{\frac{p}{2}-\nu}A^{\frac{p}{2}}XB^{\frac{p}{2}}B^{\nu-\frac{p}{2}}.
\end{align*}
Put $U=A^{\frac{p}{2}}XB^{\frac{p}{2}}$. Then
\begin{align*}
A^\nu XB^{p-\nu}-A^{p-\nu}XB^\nu&=A^{\nu-\frac{p}{2}}UB^{\frac{p}{2}-\nu}
-A^{\frac{p}{2}-\nu}UB^{\nu-\frac{p}{2}}\\
&=2(\sinh(p-2\nu)D)U,
\end{align*}

\begin{align*}
A^p X-XB^p=A^{\frac{p}{2}}UB^{-\frac{p}{2}}
-A^{-\frac{p}{2}}UB^{\frac{p}{2}}=2(\sinh pD)U.
\end{align*}
Using part (3) of Proposition \ref{p.1}, we get the following map is contractive.
\begin{align*}
\frac{(1+t)\sinh((p-2r)D)}{(p-2r)(\sinh(pD)+t\sinh(p-2\nu)D)}.
\end{align*}
\end{enumerate}
\end{proof}

In special case, the relation \eqref{2.10} for $p=1$ and $\nu=\frac{1}{2}$, becomes the following inequality
\begin{align*}
|||A^r X&B^{1-r}+A^{1-r}XB^r|||\leq\frac{2}{2+t}\left|\left|\left|tA^\frac{1}{2}XB^\frac{1}{2}+\frac{AX+XB}{2}\right|\right|\right|,
\end{align*}
which holds for $\frac{1}{4}\leq r \leq \frac{3}{4}$ and $-2<t\leq 2$. In this case the relation \eqref{1.1} follows from \eqref{2.10}.

\begin{theorem}\label{t.6}
Let $r,p,\nu\in \mathbb{R}$, $~A,X,B\in B(H)$ and $A, B$ be two positive definite operators.
If $r\leq 0\leq p$ and $r\leq \nu\leq\frac{p}{2}$ (or, $r\geq 0\geq p$ and $r\geq \nu\geq\frac{p}{2}$), then for $t\geq0$,

\begin{align}\label{2.12}
(1)~(1+t)~|||A^r X&B^{p-r}+A^{p-r}XB^r|||\notag\\
&\geq |||A^pX+t(A^\nu XB^{p-\nu}+A^{p-\nu}XB^{\nu})+XB^p)|||,
\end{align}
and
\begin{align}\label{2.13}
(2)~\left(\frac{(1+t)p-2t\nu}{p-2r}\right)& |||A^r XB^{p-r}-A^{p-r}XB^r|||\notag\\
&\geq  |||A^pX+t(A^\nu XB^{p-\nu}-A^{p-\nu}XB^{\nu})-XB^p)|||.
\end{align}

\end{theorem}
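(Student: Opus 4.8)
The plan is to run the proof of Theorem~\ref{t.5} in reverse: instead of a contractive map carrying a ``Heron-type'' expression onto a ``Heinz-type'' one, I would exhibit a contractive map carrying the right-hand side of each of \eqref{2.12}, \eqref{2.13} onto its left-hand side. Keeping the notation of that proof and setting $U=A^{\frac p2}XB^{\frac p2}$, the same computation, performed with any exponent $q$ in place of $\nu$, gives
\[
A^q XB^{p-q}+A^{p-q}XB^q=2\cosh((2q-p)D)U,\qquad A^q XB^{p-q}-A^{p-q}XB^q=2\sinh((2q-p)D)U,
\]
so in particular (taking $q=p$) $A^pX+XB^p=2\cosh(pD)U$ and $A^pX-XB^p=2\sinh(pD)U$. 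I would use the two contractivity facts from \cite{kap2} that underlie Proposition~\ref{p.1}: whenever $|a|\le|b|$, the maps $T\mapsto\frac{\cosh(aD)}{\cosh(bD)}T$ and $T\mapsto\frac{b\sinh(aD)}{a\sinh(bD)}T$ on $B(H)$ are contractive, equivalently $|||\frac{\sinh(aD)}{\sinh(bD)}T|||\le\frac{|a|}{|b|}|||T|||$ (the functions of $D$ occurring here are continuous at the spectral value $0$, where they equal $1$, resp.\ $\frac ab$). Finally, under the first hypothesis ($r\le0\le p$, $r\le\nu\le\frac p2$) one has $0\le p\le p-2r$ and $0\le p-2\nu\le p-2r$, so $|p|\le|2r-p|$ and $|2\nu-p|\le|2r-p|$; the second hypothesis gives the reflected chain $p-2r\le p\le0$, $p-2r\le p-2\nu\le0$, hence the same comparisons. (In \eqref{2.13} we tacitly assume $p\neq2r$; if $p=2r$ the hypotheses force $p=r=\nu=0$ and all operators involved vanish.)

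For part (1): dividing \eqref{2.12} by $2$ reduces it to
\[
|||(\cosh(pD)+t\cosh((2\nu-p)D))U|||\le(1+t)|||\cosh((2r-p)D)U|||.
\]
Writing $\cosh(pD)U=\frac{\cosh(pD)}{\cosh((2r-p)D)}(\cosh((2r-p)D)U)$ and $\cosh((2\nu-p)D)U=\frac{\cosh((2\nu-p)D)}{\cosh((2r-p)D)}(\cosh((2r-p)D)U)$, the triangle inequality together with contractivity of the two coefficient maps (valid since $|p|\le|2r-p|$ and $|2\nu-p|\le|2r-p|$) supplies the constant $1\cdot1+t\cdot1=1+t$. (Equivalently, part (2) of Proposition~\ref{p.1} applies with $\alpha=\frac1{1+t}$, $\beta=1$, $s_1=s_2=2r-p$.)

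For part (2): dividing \eqref{2.13} by $2$ reduces it to
\[
|||(\sinh(pD)+t\sinh((2\nu-p)D))U|||\le\frac{(1+t)p-2t\nu}{p-2r}|||\sinh((2r-p)D)U|||.
\]
Writing $\sinh(pD)U=\frac{\sinh(pD)}{\sinh((2r-p)D)}(\sinh((2r-p)D)U)$ and $\sinh((2\nu-p)D)U=\frac{\sinh((2\nu-p)D)}{\sinh((2r-p)D)}(\sinh((2r-p)D)U)$, the triangle inequality with the estimates $|||\frac{\sinh(pD)}{\sinh((2r-p)D)}S|||\le\frac{|p|}{|2r-p|}|||S|||$ and $|||\frac{\sinh((2\nu-p)D)}{\sinh((2r-p)D)}S|||\le\frac{|2\nu-p|}{|2r-p|}|||S|||$ supplies the constant $\frac{|p|+t|2\nu-p|}{|2r-p|}$, which by the sign chains above equals $\frac{p+t(p-2\nu)}{p-2r}=\frac{(1+t)p-2t\nu}{p-2r}$ in the first case and the same value in the second (numerator and denominator there changing sign together). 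This is \eqref{2.13}.

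The one step that needs care is the constant in part (2): a crude bound by $1$ for each of $\frac{\sinh(pD)}{\sinh((2r-p)D)}$ and $\frac{\sinh((2\nu-p)D)}{\sinh((2r-p)D)}$ would give only $1+t$, which is strictly weaker than the asserted $\frac{(1+t)p-2t\nu}{p-2r}$ (indeed $\frac{(1+t)p-2t\nu}{p-2r}\le1+t$ under either hypothesis). The sharp constant is forced by the finer estimate $|||\frac{\sinh(aD)}{\sinh(bD)}T|||\le\frac{|a|}{|b|}|||T|||$, i.e.\ by the fact that it is $\frac{b\sinh(aD)}{a\sinh(bD)}$, not merely $\frac{\sinh(aD)}{\sinh(bD)}$, that is contractive for $|a|\le|b|$ — precisely the mechanism behind parts (3) and (4) of Proposition~\ref{p.1}, with the difference that here no condition of the form $|s_1+s_2|\ge2$ is needed, since the factor $|p-2r|$ is carried outside the norm. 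Everything else is the elementary bookkeeping of the two sign chains.
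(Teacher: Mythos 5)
Your proof is correct and follows essentially the same route as the paper: the paper likewise splits the right-hand side by the triangle inequality into the $A^pX\pm XB^p$ term and the $\nu$-term, and then bounds each by the corresponding multiple of $|||A^rXB^{p-r}\pm A^{p-r}XB^r|||$, quoting Proposition 2.4 of \cite{kap2} for exactly the $\cosh$- and scaled-$\sinh$-ratio contractivity facts that you re-derive via the representation with $U=A^{\frac p2}XB^{\frac p2}$. Your added remarks (the sign chains, the degenerate case $p=2r$, and why the crude bound $1+t$ must be replaced by $\frac{|p|+t|p-2\nu|}{|p-2r|}$ in part (2)) are sound and merely make explicit what the paper leaves to the citation.
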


\begin{proof}
First, we prove inequality \eqref{2.13}. By part (4) of the Proposition 2.4 in \cite{kap2}, we get
\begin{align*}
|||A^pX-XB^p|||\leq \left(\frac{p}{p-2r}\right) |||A^r XB^{p-r}-A^{p-r}XB^r|||,
\end{align*}
and
\begin{align*}
|||A^\nu XB^{p-\nu}-A^{p-\nu}XB^{\nu}|||\leq \left(\frac{p-2\nu}{p-2r}\right) |||A^r XB^{p-r}-A^{p-r}XB^r|||.
\end{align*}
From the following triangle inequality
\begin{align*}
|||A^pX+t(A^\nu XB^{p-\nu}&-A^{p-\nu}XB^{\nu})-XB^p)|||\\
&\leq |||A^pX-XB^p|||+t~|||A^\nu XB^{p-\nu}-A^{p-\nu}XB^{\nu}|||,
\end{align*}
we deduce the inequality \eqref{2.13}. Similarly, the inequality \eqref{2.12} is proved.
\end{proof}

\bibliographystyle{amsplain}

\end{document}